\newtheorem{theorem}{Theorem}[section]
\newtheorem{lemma}[theorem]{Lemma}
\newtheorem{proposition}[theorem]{Proposition}
\newtheorem{definition}[theorem]{Definition}
\newtheorem{remark}[theorem]{Remark}
\numberwithin{equation}{section}
\begin{document}
\baselineskip=15.5pt

\title{Homogeneous principal bundles and stability}

\author[I. Biswas]{Indranil Biswas}

\address{School of Mathematics, Tata Institute of Fundamental
Research, Homi Bhabha Road, Bombay 400005, India}

\email{indranil@math.tifr.res.in}

\subjclass[2000]{14L30, 14F05}

\date{}

\begin{abstract}
Let $G/P$ be a rational homogeneous
variety, where $P$ is a parabolic subgroup of a
simple and simply connected linear algebraic group
$G$ defined over an algebraically closed field of
characteristic zero. A homogeneous principal bundle
over $G/P$ is semistable (respectively,
polystable) if and only if it is equivariantly semistable
(respectively, equivariantly polystable). A stable
homogeneous principal $H$--bundle $(E_H\, ,\rho)$ is
equivariantly stable, but the converse is not true
in general. If a homogeneous principal $H$--bundle
$(E_H\, ,\rho)$ is equivariantly stable, but $E_H$
is not stable,
then the principal $H$--bundle $E_H$ admits an
action $\rho'$ of $G$ such that the pair
$(E_H\, ,\rho')$ is a homogeneous principal
$H$--bundle which is not equivariantly stable.

\end{abstract}

\maketitle

\section{Introduction}\label{sec1}

Let $G$ be a simple and simply connected linear algebraic group
defined over an algebraically closed field $k$ of characteristic
zero. Fix a proper parabolic subgroup $P$ of $G$. Fix a very
ample line bundle $\xi$ on the projective variety $G/P$.
Let $H$ be any reductive linear algebraic group defined over $k$.
For any homomorphism
$$
\eta\, :\, P\, \longrightarrow\, H
$$
with the property that the image of $\eta$ is not contained
in any proper parabolic subgroup of $H$, the associated
principal $H$--bundle $G\times^P H$ over $G/P$ is known to
be stable with respect to $\xi$
\cite[page 576, Theorem 2.6]{AzB}.

A homogeneous principal $H$--bundle on $G/P$ is a principal
$H$--bundle $E_H\, \longrightarrow\, G/P$ together with an
action of $G$ 
$$
\rho\, :\, G\times E_H\, \longrightarrow\, E_H
$$
that lifts the left--translation action of $G$ on $G/P$. 
It may be mentioned that all homogeneous principal $H$--bundles
over $G/P$ are given by homomorphisms from $P$ to $H$.
Here we consider those homogeneous principal $H$--bundles over
$G/P$ that arise from homomorphisms for which the image
is contained in some proper parabolic subgroup of $H$.
We also consider a weaker notion of stability. A homogeneous
principal $H$--bundle $(E_H\, ,\rho)$ over $G/P$ is called
equivariantly stable (respectively, equivariantly semistable) if
the usual stability condition (respectively, the semistability
condition) holds for those reduction of structure groups of
$E_H$ that are preserved by the action $\rho$ of $G$ on $E_H$.
Equivariantly polystable homogeneous principal $H$--bundles
are defined similarly.

We show that a homogeneous principal $H$--bundle
$(E_H\, ,\rho)$ over $G/P$
is equivariantly semistable if and only if the principal
$H$--bundle $E_H$ is semistable (Lemma \ref{lem1}). Similarly,
$(E_H\, ,\rho)$ is equivariantly polystable if and only if the
principal $H$--bundle $E_H$ is polystable (Lemma \ref{lem2}).

If $E_H$ is stable, then $(E_H\, ,\rho)$ is equivariantly
stable. But the converse is not true. However the following
weak converse holds (see Theorem \ref{thm1}):

\begin{theorem}\label{thm0}
Let $(E_H\, ,\rho)$ be an equivariantly stable homogeneous
principal $H$--bundle over $G/P$ such that the principal
$H$--bundle $E_H$ is not stable. Then there is an action
of $G$ on $E_H$
$$
\rho'\, :\, G \times E_H\, \longrightarrow\, E_H
$$
such that the following two hold:
\begin{enumerate}
\item the pair $(E_H\, ,\rho')$ is a homogeneous
principal $H$--bundle, and

\item the homogeneous principal $H$--bundle
$(E_H\, ,\rho')$ is not equivariantly stable.
\end{enumerate}
\end{theorem}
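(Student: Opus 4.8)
The plan is to obtain the new action $\rho'$ by twisting the given action $\rho$ by an algebraic crossed homomorphism valued in the gauge group of $E_H$, arranged so that $\rho'$ preserves a destabilising reduction of $E_H$.

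First I would pass to the polystable case. Since $(E_H\, ,\rho)$ is equivariantly stable it is a fortiori equivariantly polystable, so by Lemma \ref{lem2} the principal $H$--bundle $E_H$ is polystable; as it is not stable, it carries a reduction of structure group $E_L\, \subset\, E_H$ to a Levi subgroup $L$ of a proper parabolic subgroup $Q\, \subsetneq\, H$ for which $E_L$ is a stable $L$--bundle and the associated reduction $E_Q\, =\, E_L\times^{L} Q$ is admissible, i.e. $\deg(E_Q(\mathfrak h/\mathfrak q))\, =\, 0$. Let $\mathcal R$ be the set of all reductions of $E_H$ to $L$ of this type. Then $\mathcal R$ is a non-empty variety on which the automorphism group scheme $\Gamma\, :=\, \mathrm{Aut}(E_H)$ acts transitively, by the Jordan--H\"older uniqueness of the polystable reduction; here $\Gamma$ is reductive because $E_H$ is polystable. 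Fix $r_0\, \in\, \mathcal R$ with underlying reduction $E_L$, and let $\Gamma_0\, :=\, \mathrm{Aut}_L(E_L)$ be its stabiliser in $\Gamma$; $\Gamma_0$ is reductive because $E_L$ is stable.

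Next I would assemble the two kinds of symmetry into one group. The action $\rho$ turns $\Gamma$ into a $G$--group (via conjugation) and $\mathcal R$ into a $G$--variety --- each $g\,\in\, G$ acts on $G/P$ by an automorphism, so $\rho(g)$ sends reductions in $\mathcal R$ to reductions in $\mathcal R$ --- compatibly with the $\Gamma$--action. Form the semidirect product $\widehat G\, :=\, G\ltimes_\rho\Gamma$, which acts on $E_H$ ($G$ through $\rho$, $\Gamma$ through gauge transformations) and hence on $\mathcal R$, and let $\widehat G_0\, :=\, \mathrm{Stab}_{\widehat G}(r_0)$. Transitivity of $\Gamma$ on $\mathcal R$ makes the projection $\widehat G_0\, \longrightarrow\, G$ surjective with kernel $\Gamma_0$, so there is an exact sequence $1\, \to\, \Gamma_0\, \to\, \widehat G_0\, \to\, G\, \to\, 1$.

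The decisive step is that this sequence splits: since $\Gamma_0$ is reductive, $\widehat G_0$ has no nontrivial normal unipotent subgroup and is therefore reductive; replacing $\widehat G_0$ by its identity component if necessary, its derived subgroup is semisimple and still surjects onto $G$, and a surjection of semisimple groups in characteristic zero onto a simply connected group splits (it is, up to an isogeny which must be an isomorphism onto the target, a projection from a product of almost--simple factors). Using that $G$ is simple and simply connected I thus get a homomorphic section $s\, :\, G\,\to\, \widehat G_0\,\subset\,\widehat G$, say $s(g)\, =\, (g\, ,\beta(g))$ with $\beta\, :\, G\,\to\,\Gamma$ a morphism. Now define $\rho'(g)\, :=\, \beta(g)\circ\rho(g)$, i.e. let $G$ act on $E_H$ through $s$ and the $\widehat G$--action. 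Because $s$ is a homomorphism, $\rho'$ is a $G$--action on $E_H$, and because each $\beta(g)$ covers the identity of $G/P$, each $\rho'(g)$ covers left translation by $g$; hence $(E_H\, ,\rho')$ is a homogeneous principal $H$--bundle, giving (1). Since $s(G)\,\subset\,\widehat G_0\,=\,\mathrm{Stab}_{\widehat G}(r_0)$, the action $\rho'$ fixes $r_0$, so it preserves the reduction $E_L$ and with it the admissible reduction $E_Q$ to the proper parabolic $Q\,\subsetneq\, H$; therefore $(E_H\, ,\rho')$ is not equivariantly stable, giving (2). The main obstacle is the splitting of the sequence $1\,\to\,\Gamma_0\,\to\,\widehat G_0\,\to\, G\,\to\, 1$ --- this is exactly where stability of $E_L$ (reductivity of $\Gamma_0$, hence of $\widehat G_0$) and simple--connectedness of $G$ enter --- together with quoting correctly, in characteristic zero, the reductivity of $\mathrm{Aut}(E_H)$ and the uniqueness up to conjugacy of the stable Levi reduction of a polystable principal bundle.
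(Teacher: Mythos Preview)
Your argument is correct and rests on the same splitting principle as the paper's, but the packaging is genuinely different.  The paper discards $\rho$ once Lemma~\ref{lem2} has furnished polystability: it passes to the Krull--Schmidt minimal Levi reduction $E_{L(Q)}\subset E_H$, observes that its isomorphism class is $G$--translation invariant, and then applies Proposition~\ref{prop1} directly to the principal $L(Q)$--bundle $E_{L(Q)}$ to manufacture a homogeneous structure $\rho''$ on it from scratch; the action $\rho'$ on $E_H$ is simply the extension of $\rho''$ through $L(Q)\hookrightarrow H$.  You instead keep $\rho$ in play and twist it: you build the semidirect product $\widehat G=G\ltimes_\rho\Gamma$ acting on $E_H$, look at the stabiliser $\widehat G_0$ of the chosen Levi reduction, and split $1\to\Gamma_0\to\widehat G_0\to G\to 1$.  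The splitting step in both proofs is the same Mostow/Levi argument for an extension of the simply connected $G$ by a reductive kernel---in the paper it is hidden inside Proposition~\ref{prop1}, while you invoke it explicitly.  What your route buys is that $\rho'$ is visibly a cocycle twist of the original $\rho$, and you never leave $E_H$; what the paper's route buys is that one need not analyse the $\Gamma$--orbit structure on the space of Levi reductions (the transitivity you use is exactly the Krull--Schmidt uniqueness of \cite{BBN1}, which the paper cites for the weaker statement that $\phi(g)^*E_{L(Q)}\cong E_{L(Q)}$).  Two small points worth tightening in your write-up: your ``Jordan--H\"older uniqueness'' is precisely \cite[Proposition~3.3]{BBN1}, and the reductivity of $\widehat G_0$ deserves the one-line argument (its unipotent radical meets $\Gamma_0$ trivially and maps to a normal unipotent subgroup of the simple $G$, hence is trivial).
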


\noindent
\textbf{Acknowledgements.}\, The author is very grateful to
the referee for providing comments to improve the paper.

\section{Preliminaries}\label{sec2}

Let $k$ be an algebraically closed field of characteristic zero.
Let $G$ be a simple and simply connected linear algebraic group
defined over the field $k$. Fix a proper parabolic subgroup 
$$
P\,\subset\, G\, .
$$
So the quotient
\begin{equation}\label{e1}
M\, :=\, G/P
\end{equation}
is an irreducible smooth projective variety defined over $k$.
The quotient map
\begin{equation}\label{e2}
f_0\, :\, G\, \longrightarrow\, G/P
\end{equation}
defines a principal $P$--bundle over $M$. The left translation
action of $G$ on itself defines a homomorphism
\begin{equation}\label{e3}
\phi\, :\, G\, \longrightarrow\, \text{Aut}(M)\, .
\end{equation}

Fix a very ample line bundle $\xi$ on $M$. It is known that
any ample line bundle on $M$ is very ample. The degree of any
torsionfree coherent sheaf on $M$ will be defined using $\xi$.
More precisely, for any torsionfree coherent sheaf $F$ on $M$,
the degree of $F$ is defined to be the degree of the
restriction of $F$ to any smooth complete intersection curve 
on $M$ obtained by intersecting hyperplanes from the complete
linear system $\vert\xi\vert$. Let $F$ be a
vector bundle defined over a nonempty Zariski open dense
subset $U\, \subseteq\, G/P$ such that the
codimension of the complement
$(G/P)\setminus U$ is at least two. Then
the direct image $\iota_* F$ is a torsionfree
coherent sheaf on $G/P$, where $\iota\, :\, U\,
\hookrightarrow\, G/P$ is the inclusion map. For
such a coherent sheaf $F$ define
$$
\text{degree}(F)\, :=\, \text{degree}(\iota_* F)\, .
$$

Let $H$ be a connected reductive linear algebraic group
defined over the field $k$. Let $Q$ be a proper
parabolic subgroup of $H$, and let $\lambda$ be a
character of $Q$ which is trivial on the connected
component of the center of $H$ containing the identity
element. Such a character $\lambda$ is called
\textit{strictly anti--dominant} if the associated line
bundle $L_\lambda \, =\, H\times^{Q} k$ over $H/Q$ is ample.

A principal $H$--bundle $E_H$ over
$M$ is called \textit{stable} (respectively,
\textit{semistable}) if for every triple of the form
$(Q\, , E_Q\, , \lambda)$, where
\begin{itemize}
\item $Q\, \subsetneq\, H$ is a proper parabolic
subgroup, and
\begin{equation}\label{d.s}
E_Q\, \subset\, E_H
\end{equation}
is a reduction of structure
group of $E_H$ to $Q$ over some nonempty Zariski open
subset $U\, \subset\, G/P$ such
that the codimension of the complement $(G/P)\setminus U$
is at least two, and

\item $\lambda$ is some strictly anti--dominant character of
$Q$ (see the above definition of an anti--dominant character),
\end{itemize}
the inequality
$$
\text{degree}(E_Q(\lambda)) \, >\, 0
$$
(respectively, $\text{degree}(E_Q(\lambda)) \, \geq\, 0$) holds,
where $E_Q(\lambda)$ is the line bundle over $U$ associated to
the principal $Q$--bundle $E_Q$ for the character $\lambda$ of $Q$.

In order to be able to
decide whether a given principal $H$--bundle $E_H$ is
stable (respectively, semistable), it suffices to verify the
above strict inequality (respectively, inequality) only for
the maximal proper
parabolic subgroups of $H$. More precisely,
$E_H$ is stable (respectively, semistable) if and
only if for every pair of the form $(Q\, , \sigma)$, where
\begin{itemize}
\item $Q\, \subset\, H$ is a proper maximal parabolic subgroup,
and 
\item $\sigma$ is a reduction of structure group of $E_H$ to $Q$
\begin{equation}\label{sigma}
\sigma\, :\, U\, \longrightarrow\, E_H/Q
\end{equation}
over some Zariski open dense subset $U\, \subset\, G/P$ such
that the codimension of the complement $(G/P)\setminus U$
is at least two,
\end{itemize}
the inequality
\begin{equation}\label{st.eq}
\text{degree}(\sigma^* T_{\text{rel}})\, >\, 0
\end{equation}
\begin{equation}\label{st.eq1}
{\rm (respectively,\,~\,}~\, ~\,
\text{degree}(\sigma^* T_{\text{rel}})\,
\geq\, 0{\rm )}
\end{equation}
holds, where $T_{\text{rel}}$ is the relative
tangent bundle over $E_H/Q$ for the natural projection
$E_H/Q\, \longrightarrow\, G/P$. (See
\cite[page 129, Definition 1.1]{Ra} and \cite[page 131,
Lemma 2.1]{Ra}.)

Let $E_H$ be a principal $H$--bundle over $G/P$. A reduction
of structure group of $E_H$
\[
E_Q\, \subset\, E_H
\]
to some parabolic subgroup $Q\, \subset\, H$ is called
\textit{admissible} if for each character $\lambda$ of
$Q$ trivial on the center of $H$, the degree of the
associated line bundle $E_Q(\lambda)\, =\,
{E_Q}\times^Q k$ is zero
\cite[page 307, Definition 3.3]{Ra}.

The unipotent radical of a parabolic subgroup $Q\, \subset
\,H$ will be denoted by $R_u(Q)$. The quotient group
\[
L(Q) \, :=\, P/R_u(Q)\, ,
\]
which is called the \textit{Levi quotient} of $Q$, is a
connected reductive linear algebraic group defined over $k$.
A \textit{Levi subgroup} of $Q$ is a closed connected
reductive subgroup
$$
L'\, \subset\, Q
$$
such that the composition homomorphism
$$
L'\, \hookrightarrow\, Q\, \longrightarrow\, L(Q)
$$
is an isomorphism (here $Q\, \longrightarrow\, L(Q)$ is the
quotient map). (See \cite[page 158, \S~11.22]{Bo} and
\cite[page 184, \S~30.2]{Hu}.) The notation $L(Q)$
will also be used for denoting a Levi subgroup of $Q$.

A principal $H$--bundle $E_H$ over $G/P$ is called
\textit{polystable} if either $E_H$ is stable, or there
is a proper parabolic subgroup $Q\, \subset\, H$
and a reduction of structure group over $G/P$
$$
E_{L(Q)}\, \subset\, E_H
$$
to a Levi subgroup $L(Q)$ of $Q$ such that the
following two conditions hold:
\begin{enumerate}
\item the principal $L(Q)$--bundle $E_{L(Q)}$ is stable, and 

\item the reduction of structure group of $E_H$ to $Q$
obtained by extending the structure group of $E_{L(Q)}$
using the inclusion of $L(Q)$ in $Q$ is admissible.
\end{enumerate}

Let $H'$ be any linear algebraic group defined over $k$.

\begin{definition}\label{def1}
{\rm A} homogeneous {\rm principal $H'$--bundle over $G/P$
is a principal $H'$--bundle}
\begin{equation}\label{e4}
f\, :\, E_{H'}\, \longrightarrow\, G/P
\end{equation}
{\rm together with an action of $G$}
\begin{equation}\label{rho}
\rho\, :\, G \times E_{H'}\, \longrightarrow\, E_{H'}
\end{equation}
{\rm such that the following two conditions hold:}
\begin{enumerate}
\item $f\circ \rho (g\, ,z)\, =\, \phi(g)(f(z))$
{\rm for all $(g\, ,z)\, \in\, G
\times E_{H'}$, where $\phi$ and $f$ are defined in
Eq. \eqref{e3} and Eq. \eqref{e4} respectively, and}

\item {\rm the actions of $G$ and $H'$ on $E_{H'}$ commute.}
\end{enumerate}
\end{definition}

Let $H$ be a connected reductive linear algebraic group
defined over $k$.

\begin{definition}\label{def2}
{\rm A homogeneous principal $H$--bundle
$(E_H\, ,\rho)$ is called} equivariantly
stable {\rm (respectively,} equivariantly semistable{\rm{)}
if the condition in the definition of stability
(respectively, semistability) holds for all $E_Q$ as
in Eq. \eqref{d.s} that are left invariant by the action
$\rho$ of $G$ on $E_H$.}

{\rm Similarly, a homogeneous
principal $H$--bundle $(E_H\, ,\rho)$ is called}
equivariantly polystable {\rm if either $E_H$ is
equivariantly stable, or there
is a proper parabolic subgroup $Q\, \subset\, H$
and a reduction of structure group over $G/P$
$$
E_{L(Q)}\, \subset\, E_H
$$
to a Levi subgroup $L(Q)$ of $Q$ such that the
following three conditions hold:}
\begin{enumerate}
\item {\rm the action of $G$ on $E_H$ leaves $E_{L(Q)}$
invariant,}

\item {\rm the principal $L(Q)$--bundle $E_{L(Q)}$ is
equivariantly stable, and}

\item {\rm the reduction of structure group of $E_H$ to $Q$
obtained by extending the structure group of $E_{L(Q)}$
using the inclusion of $L(Q)$ in $Q$ is admissible.}
\end{enumerate}
\end{definition}

\begin{remark}
{\rm A homogeneous principal $H$--bundle
$(E_H\, ,\rho)$ is equivariantly
stable if the inequality in Eq. \eqref{st.eq} holds
for all $\sigma$ as
in Eq. \eqref{sigma} that are invariant under the action
of $G$ on $E_H/Q$ defined by $\rho$.}

{\rm Similarly, a homogeneous principal $H$--bundle
$(E_H\, ,\rho)$ is equivariantly
semistable if the inequality in Eq. \eqref{st.eq1} holds
for all $\sigma$ as
in Eq. \eqref{sigma} that are invariant under the action
of $G$ on $E_H/Q$ defined by $\rho$.}
\end{remark}

\section{A criterion for homogeneous principal bundles}

If $(E_{H'}\, ,\rho)$ is a homogeneous
principal $H'$--bundle over $G/P$,
then for each point $g\, \in\, G$, the pulled back principal
$H'$--bundle $\phi(g)^*E_{H'}$ is isomorphic to $E_{H'}$,
where $\phi$ is the homomorphism in Eq. \eqref{e3}. Indeed,
the automorphism of the variety $E_{H'}$ defined by 
$z\, \longmapsto\, \rho(g\, ,z)$ gives an isomorphism
of the principal $H'$--bundles $E_{H'}\, \longrightarrow\,
\phi(g)^*E_{H'}$.

The following proposition asserts a converse of the above
observation.

\begin{proposition}\label{prop1}
Let $H'$ be a linear algebraic group defined over $k$. Let
$$
\gamma\, :\, E_{H'}\, \longrightarrow\, G/P
$$
be a principal $H'$--bundle such that
for each point $g\, \in\, G$, the pulled back principal
$G$--bundle $\phi(g)^*E_{H'}$ is isomorphic to $E_{H'}$,
where $\phi$ is the homomorphism in Eq. \eqref{e3}. Then
there is an action of $G$ on $E_{H'}$
$$
\rho\, :\, G \times E_{H'}\, \longrightarrow\, E_{H'}
$$
such that the pair $(E_{H'}\, ,\rho)$
is a homogeneous principal $H'$--bundle.
\end{proposition}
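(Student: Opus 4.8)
The plan is to construct $\rho$ from a splitting of a group extension. Let $\Phi\colon G\times(G/P)\to G/P$ denote the action morphism $\Phi(g,x)=\phi(g)(x)$ and let $\mathrm{pr}_2$ be the projection to $G/P$, so that $\Phi$ restricts to $\phi(g)$ on $\{g\}\times(G/P)$. Since $G/P$ is projective, the relative isomorphism scheme
$$
\mathcal G\, :=\, \underline{\mathrm{Isom}}_{(G\times(G/P))/G}\bigl(\mathrm{pr}_2^*E_{H'}\, ,\, \Phi^*E_{H'}\bigr)
$$
is representable by a scheme of finite type, affine over $G$; its fibre over $g\in G$ is the set of isomorphisms of principal $H'$--bundles $E_{H'}\to\phi(g)^*E_{H'}$, equivalently the set of automorphisms $\Psi$ of the variety $E_{H'}$ commuting with the $H'$--action and satisfying $\gamma\circ\Psi\, =\, \phi(g)\circ\gamma$. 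Using $\phi(g_1g_2)=\phi(g_1)\circ\phi(g_2)$, composition of such automorphisms makes $\mathcal G$ a group and $q\colon\mathcal G\to G$, $(g,\Psi)\mapsto g$, a homomorphism of algebraic groups. Its kernel is the group of automorphisms of the principal $H'$--bundle $E_{H'}$, which is the group of global sections over $G/P$ of the group scheme $E_{H'}\times^{H'}H'$ (with $H'$ acting on itself by conjugation), hence a linear algebraic group because $G/P$ is complete; therefore $\mathcal G$ is linear algebraic. The hypothesis that $\phi(g)^*E_{H'}\cong E_{H'}$ for every $g\in G$ says precisely that $q$ is surjective on $k$--points, and hence (the image being a closed subgroup containing the dense set of $k$--points of $G$) that $q$ is surjective.

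Next I would split $q$, and this is the step that uses that $G$ is simple and simply connected. It suffices to produce a homomorphism $s\colon G\to\mathcal G$ with $q\circ s=\mathrm{id}_G$. Since $G$ is connected we may replace $\mathcal G$ by its identity component, for which $q$ is still surjective. In characteristic zero, $\mathcal G$ admits a Levi decomposition $\mathcal G=R_u(\mathcal G)\rtimes L$ with $L$ connected reductive. Then $q(R_u(\mathcal G))$ is a connected normal unipotent subgroup of the semisimple group $G$, hence trivial, so $q(L)=G$; likewise the image under $q$ of the central torus of $L$ is a connected central subgroup of $G$, hence trivial, so $q([L,L])=G$. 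Writing the semisimple group $[L,L]$ as an almost direct product $S_1\cdots S_r$ of its almost--simple factors, each $q(S_i)$ is a connected normal subgroup of the almost--simple group $G$, hence is either trivial or all of $G$; choose $i$ with $q(S_i)=G$. The kernel of $q|_{S_i}\colon S_i\to G$ is a proper normal subgroup of the almost--simple group $S_i$, hence finite, so $q|_{S_i}$ is an isogeny onto $G$; since $G$ is simply connected, $q|_{S_i}$ is an isomorphism. Set $s\, :=\, (q|_{S_i})^{-1}\colon G\to S_i\, \subseteq\, \mathcal G$.

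Finally I would read off the action. Write $s(g)=(g,\Psi_g)$ and define $\rho(g,z)\, :=\, \Psi_g(z)$; this is a morphism $G\times E_{H'}\to E_{H'}$ because $s$ is a morphism, and it is a left action of $G$ on $E_{H'}$ because $s$ is a homomorphism and the group law of $\mathcal G$ is composition of the $\Psi$'s. By construction every $\Psi_g$ commutes with the $H'$--action and satisfies $\gamma\circ\Psi_g=\phi(g)\circ\gamma$, which are exactly the two conditions of Definition \ref{def1}; hence $(E_{H'}\, ,\rho)$ is a homogeneous principal $H'$--bundle. I expect the real content to lie in the first two paragraphs: verifying that $\mathcal G$ is honestly a linear algebraic group of finite type with $q$ an algebraic homomorphism — which rests on completeness of $G/P$ and representability of the $\mathrm{Isom}$ functor — and, above all, the splitting argument, which is exactly where simple connectedness of $G$ is indispensable.
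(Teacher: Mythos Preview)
Your proof is correct and follows essentially the same strategy as the paper: build the linear algebraic group $\widetilde{\mathcal A}$ (your $\mathcal G$) of bundle automorphisms covering translations, obtain the short exact sequence $e\to\mathrm{Aut}(E_{H'})\to\widetilde{\mathcal A}\to G\to e$, split it using a Levi/Mostow reductive subgroup together with the decomposition of its derived group into (almost--)simple factors and the simple connectedness of $G$, and read off $\rho$ from the splitting. The only differences are cosmetic---the paper establishes the algebraic--group structure via an explicit embedding through a faithful representation rather than Isom--scheme representability, and it passes to the simply connected cover $\prod G_i$ of $[\mathcal G,\mathcal G]$ to pick out a factor, whereas you argue directly that some almost--simple factor $S_i$ is isogenous, hence isomorphic, to $G$.
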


\begin{proof}
Let $\mathcal A$ denote the group of automorphisms of
the principal $H'$--bundle $E_{H'}$. So $\mathcal A$
consists of all automorphisms of the variety $E_{H'}$
$$
h\, :\, E_{H'}\, \longrightarrow\, E_{H'}
$$
such that
\begin{itemize}
\item $\gamma\circ h\, =\, \gamma$, and
\item $h$ commutes with the action of $H'$ on $E_{H'}$.
\end{itemize}

We will show that
$\mathcal A$ is a linear algebraic group defined over $k$.

Fix a finite dimensional faithful representation
\begin{equation}\label{e5}
\tau\,:\, H'\, \longrightarrow\, \text{GL}(V)
\end{equation}
of $H'$.
Let $E_V\, :=\, E_{H'}\times^{H'} V$ be the vector bundle
over $G/P$ associated to $E_{H'}$ for this $H'$--module $V$.
Let $\text{Aut}(E_V)$ denote the group of all automorphisms
of the vector bundle $E_V$. We note that
$$
\text{Aut}(E_V)\, \hookrightarrow\, H^0(G/P,\, E_V\otimes
E^*_V)\, .
$$
Using this inclusion, $\text{Aut}(E_V)$ has the structure
of a linear algebraic group defined over $k$. Any automorphism
of the principal $H'$--bundle $E'_H$ yields an automorphism
of the associated vector bundle $E_V$. Since $\tau$ in
Eq. \eqref{e5} is injective, it follows that $\mathcal A$
is a closed subgroup of $\text{Aut}(E_V)$. Hence $\mathcal A$
is a linear algebraic group defined over $k$.

Let $\widetilde{\mathcal A}$ denote the group of all
pairs of the form $(g\, ,h)$, where $g\, \in\, G$, and
$$
h\, :\, E_{H'}\, \longrightarrow\, E_{H'}
$$
is an automorphism of the variety $E_{H'}$ satisfying the
following two conditions:
\begin{enumerate}
\item $\gamma\circ h\, =\,\phi(g)\circ\gamma$, and

\item $h$ commutes with the action of $H'$ on $E_{H'}$.
\end{enumerate}
The group operation on $\widetilde{\mathcal A}$ is:
$$
(g_1\, ,h_1)(g_2\, ,h_2)\, :=\,(g_1\circ g_2\, , h_1\circ h_2)\, .
$$
We will show that $\widetilde{\mathcal A}$ is also a linear
algebraic group defined over $k$.

Let
\begin{equation}\label{e6}
\phi_0\, :\, G\times M\, \longrightarrow\, M\, :=\, G/P
\end{equation}
be the left action defined by $\phi$ in Eq. \eqref{e3}.
Let
\begin{equation}\label{e7}
p_2\, :\, G\times M\, \longrightarrow\, M
\end{equation}
be the projection to the second factor. Let $\mathcal S$ denote
the sheaf of isomorphisms from the principal $H'$--bundle
$p^*_2 E_{H'}$ to the principal $H'$--bundle $\phi^*_0 E_{H'}$
over $G\times M$, where $\phi_0$ and $p_2$ are defined in
Eq. \eqref{e6} and Eq. \eqref{e7} respectively. Now
consider the direct image
$$
\widetilde{\mathcal S} \, :=\, p_{1*}{\mathcal S}
$$
over $G$, where $p_1$ as before is the projection of $G\times
M$ to $G$. Comparing the definitions $\widetilde{\mathcal S}$
and $\widetilde{\mathcal A}$ it follows immediately
that $\widetilde{\mathcal S}$ is identified
with $\widetilde{\mathcal A}$.

As before, let $E_V$ denote the
vector bundle associated to $E_{H'}$ for the faithful $H'$--module
$V$ in Eq. \eqref{e5}. The total space of $\widetilde{\mathcal S}$
is naturally embedded in the total space of the vector bundle
\begin{equation}\label{bG}
p_{1*}((\phi^*_0 E_V)\otimes p^*_2 E^*_V)\, \longrightarrow\,
G\, ,
\end{equation}
where $\phi_0$ and $p_2$ are defined in Eq. \eqref{e6} and Eq.
\eqref{e7} respectively, and $p_1$ is the projection of
$G\times M$ to $G$. Using this embedding, the total space of
$\widetilde{\mathcal S}$ gets a structure of a scheme defined
over $k$. Consequently, the identification of
$\widetilde{\mathcal A}$
with $\widetilde{\mathcal S}$ makes $\widetilde{\mathcal A}$ a
scheme defined over $k$. The group operations (multiplication
and inverse maps) are algebraic. Hence $\widetilde{\mathcal A}$
is an algebraic group defined over $k$.

Since $G$ is an affine variety, the total space of the vector
bundle $p_{1*}((\phi^*_0 E_V)\otimes p^*_2 E^*_V)$ in
Eq. \eqref{bG} is also an affine variety. So
$\widetilde{\mathcal A}$ is an affine scheme. Therefore, we
conclude that $\widetilde{\mathcal A}$ is a linear
algebraic group defined over $k$.

Let
\begin{equation}\label{e8}
p\, :\, \widetilde{\mathcal A}\, \longrightarrow\, G
\end{equation}
be the homomorphism defined by $(g\, ,h)\, \longmapsto\, g$.
Let
\begin{equation}\label{e9}
I\, :\, {\mathcal A}\, \longrightarrow\, \widetilde{\mathcal A}
\end{equation}
be the homomorphism defined by $h\, \longmapsto\,
(e\, ,h)$, where $e\, \in\, G$ is the identity element.
Since $\phi(g)^*E_{H'}$ is isomorphic to $E_{H'}$ for all
$g\, \in\, G$, the homomorphism $p$ in Eq. \eqref{e8} is
surjective. Hence we have a short exact sequence of groups
\begin{equation}\label{ag}
e\,\longrightarrow\,{\mathcal A}\,
\stackrel{I}{\longrightarrow}\,\widetilde{\mathcal A}\,
\stackrel{p}{\longrightarrow}\,G\,\longrightarrow\,e\, ,
\end{equation}
where $I$ is defined in Eq. \eqref{e9}.

We will show that the short exact
sequence in Eq. \eqref{ag} is right split, or
in other words, there is a homomorphism
\begin{equation}\label{e10}
\psi\, :\,G\,\longrightarrow\, \widetilde{\mathcal A}
\end{equation}
such that $p\circ\psi\,=\, \text{Id}_G$.

To prove this, let $\widetilde{\mathcal A}_0$ denote the
connected component of $\widetilde{\mathcal A}$ containing
the identity element. Let
\begin{equation}\label{e11}
{\mathcal G} \,\subset\, \widetilde{\mathcal A}_0
\end{equation}
be a maximal connected reductive subgroup of
$\widetilde{\mathcal A}_0$ \cite[page 217,
Theorem 7.1]{Mo}. (As $\widetilde{\mathcal A}_0$
is connected, from \cite[page 217, Theorem 7.1]{Mo} we
know that any two maximal connected
reductive subgroups of it are conjugate.) Therefore,
the commutator subgroup
$$
{\mathcal G}'\, :=\, [{\mathcal G}\, ,{\mathcal G}]
\,\subset\, {\mathcal G}
$$
is semisimple, where ${\mathcal G}$ is constructed in Eq.
\eqref{e11}. The homomorphism $p$ in Eq. \eqref{e8}
is surjective, and $G$ is simple. Hence the restriction
$$
p'\, :=\, p\vert_{{\mathcal G}'}\, :\, {\mathcal G}'
\, \longrightarrow\, G
$$
is also surjective. Express the semisimple
group ${\mathcal G}'$ as
a quotient of a product of simple and simply
connected groups by a finite group. So
$$
{\mathcal G}'\, =\, (\prod_{i=1}^n G_i)/\Gamma\, ,
$$
where each $G_i$ is a simple and simply
connected linear algebraic group defined over $k$, and
$\Gamma$ is a finite group contained in the center
of $\prod_{i=1}^n G_i$. Let
$$
q'\, :\, \prod_{i=1}^n G_i\, \longrightarrow\, {\mathcal G}'
$$
be the quotient map. Since $G$ is simple and simply
connected, and $p'$ is surjective, there is some
$i_0\, \in\, [1\, ,n]$ such that the homomorphism
$$
p_0\, :=\, (p'\circ q')\vert_{G_{i_0}}\, :\,
G_{i_0}\, \longrightarrow\, G
$$
is an isomorphism.

The homomorphism
$$
\psi\, :=\, q'\circ p^{-1}_0 \, :\,G\,\longrightarrow\,
{\mathcal G}'\, \hookrightarrow\,\widetilde{\mathcal A}
$$
clearly satisfies the splitting condition
$$
p\circ\psi\,=\, \text{Id}_G\, .
$$
Fix a homomorphism $\psi$ as in Eq. \eqref{e10}
such that $p\circ\psi\,=\, \text{Id}_G$.

Now we have an action of $G$ on $E_{H'}$
$$
\rho\, :\, G \times E_{H'}\, \longrightarrow\, E_{H'}
$$
defined by
$$
(g\, ,z)\, \longmapsto\, \psi(g)(z)\, \in\,
(E_{H'})_{\phi(g)(\gamma(z))}\, ,
$$
where $\phi$ and $\psi$ are the maps in Eq. \eqref{e3}
and Eq. \eqref{e10} respectively (the map $\gamma$ is
as in the statement of the
proposition). It is straight--forward to check
that $\rho$ satisfies the two conditions in Definition
\ref{def1}. This completes the proof of the proposition.
\end{proof}

\section{Semistable and polystable homogeneous principal
bundles}

Let $H$ be a connected reductive linear algebraic group
defined over $k$. Let $(E_H\, ,\rho)$ be a homogeneous
principal $H$--bundle over $G/P$.

\begin{lemma}\label{lem1}
The principal $H$--bundle $E_H$ is semistable if and
only if $(E_H\, ,\rho)$ is equivariantly semistable.
\end{lemma}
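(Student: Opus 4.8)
The forward implication is immediate and I would dispose of it in one sentence: if $E_H$ is semistable, then the inequality $\text{degree}(E_Q(\lambda))\,\geq\, 0$ holds for \emph{every} reduction $E_Q\,\subset\, E_H$ to a parabolic $Q$ and every strictly anti--dominant character $\lambda$ of $Q$; in particular it holds for those $E_Q$ that are left invariant by $\rho$, so $(E_H\, ,\rho)$ is equivariantly semistable. Hence the whole content of the lemma is the converse, and the plan is to prove it by contradiction using the canonical (Harder--Narasimhan) reduction.

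So assume $(E_H\, ,\rho)$ is equivariantly semistable and suppose, for contradiction, that $E_H$ is not semistable. Then $E_H$ admits a \emph{canonical reduction} of structure group $E_Q\,\subset\, E_H$ to a proper parabolic subgroup $Q\,\subsetneq\, H$, defined over a Zariski open subset $U\,\subset\, G/P$ with $\text{codim}((G/P)\setminus U)\,\geq\, 2$, with the two defining properties: the principal $L(Q)$--bundle obtained from $E_Q$ by extending the structure group to the Levi quotient is semistable, and $\text{degree}(E_Q(\lambda))\,<\, 0$ for every strictly anti--dominant character $\lambda$ of $Q$. In particular $E_Q$ already witnesses the failure of semistability. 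The crucial feature is that this reduction is \emph{unique}: two canonical reductions of $E_H$ agree over the intersection of their domains, hence define the same reduction over a big open set of $G/P$. Existence and uniqueness of such a reduction, with degrees measured against $\xi$ and reductions taken over open subsets of codimension $\geq 2$ complement, is the input I would quote.

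The key step is then to show $E_Q$ is $\rho$--invariant. Fix $g\,\in\, G$. As recorded just before Proposition \ref{prop1}, the map $z\,\longmapsto\,\rho(g\, ,z)$ is an isomorphism of principal $H$--bundles $E_H\,\xrightarrow{\ \sim\ }\,\phi(g)^*E_H$ (after the canonical identification making it cover $\text{Id}_{G/P}$). Because $G$ is connected it acts trivially on $\text{Pic}(G/P)$, so $\phi(g)^*\xi\,\cong\,\xi$ and "$\text{degree}$" is invariant under the translations $\phi(g)$; consequently the two defining properties of the canonical reduction are preserved by pulling back along the automorphism $\phi(g)$ of $G/P$, which forces the canonical reduction of $\phi(g)^*E_H$ to be $\phi(g)^*E_Q$. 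By uniqueness, the isomorphism $z\,\longmapsto\,\rho(g\, ,z)$ must carry $E_Q$ onto $\phi(g)^*E_Q$; unwinding the identification, this says precisely that $\rho(g\, ,\cdot)$ maps the fibre $(E_Q)_x$ onto $(E_Q)_{\phi(g)(x)}$ over the overlap $U\cap\phi(g)^{-1}(U)$, and this extends over the codimension $\geq 2$ complement since reductions over such open sets extend uniquely. Letting $g$ vary, $E_Q$ is a reduction of structure group of $E_H$ left invariant by $\rho$. But the property $\text{degree}(E_Q(\lambda))\,<\, 0$ then exhibits a $G$--invariant reduction violating the semistability inequality, contradicting the equivariant semistability of $(E_H\, ,\rho)$. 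Therefore $E_H$ is semistable, which completes the argument.

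I expect the main technical nuisance to be the bookkeeping with the open sets of definition: the canonical reduction of $E_H$ and of its translates $\phi(g)^*E_H$ are a priori defined over different big open subsets, and one must check the resulting object genuinely patches to a $\rho$--invariant reduction defined over a single big open set; together with this, one must be careful that "degree with respect to $\xi$" is genuinely $\phi(g)$--invariant so that the characterization of the canonical reduction transports under the $G$--action. An essentially equivalent alternative, which I could use instead, is to pass to the adjoint vector bundle: since $H$ is reductive, $E_H$ is semistable if and only if $\text{ad}(E_H)$ is a semistable vector bundle, the Harder--Narasimhan filtration of $\text{ad}(E_H)$ is unique (hence $\rho$--invariant), and from it one reconstructs a $\rho$--invariant destabilizing reduction of $E_H$; this, however, only relocates the same gluing and equivariance bookkeeping rather than removing it.
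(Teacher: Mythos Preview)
Your proof is correct and follows essentially the same route as the paper: the forward direction is immediate, and for the converse you invoke the unique Harder--Narasimhan reduction $E_Q\subset E_H$ and use its uniqueness to conclude it is $\rho$--invariant, which then contradicts equivariant semistability. The paper compresses your careful discussion of why uniqueness forces $G$--invariance (degree invariance under $\phi(g)$, transport of the canonical reduction, the open--set bookkeeping) into the single phrase ``from the uniqueness of $E_Q$ it follows immediately that the action of $G$ on $E_H$ leaves $E_Q$ invariant,'' citing \cite{BH} for the Harder--Narasimhan reduction; your expansion of that step is a correct unpacking of what the paper takes for granted.
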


\begin{proof}
If $E_H$ is semistable, then clearly $(E_H\, ,\rho)$ is
equivariantly semistable. To prove the converse,
assume that $E_H$ is not semistable. Then $E_H$ admits a
unique Harder--Narasimhan reduction
$$
E_Q\, \subset\, E_H
$$
that contradicts the semistability condition of $E_H$
(see \cite[page 211, Theorem 4.1]{BH}). From the
uniqueness of $E_Q$
it follows immediately that the action of $G$
on $E_H$ leaves $E_Q$ invariant. Therefore, $E_H$ is not
equivariantly semistable. This completes the proof of the lemma.
\end{proof}

\begin{lemma}\label{lem2}
Let $(E_H\, ,\rho)$ be a homogeneous
principal $H$--bundle over $G/P$.
The principal $H$--bundle $E_H$ is polystable if and
only if $(E_H\, ,\rho)$ is equivariantly polystable.
\end{lemma}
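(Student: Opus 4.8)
My plan is to imitate the structure of Lemma \ref{lem1}, exploiting a uniqueness phenomenon so that $G$-invariance of the relevant reduction comes for free. The ``only if'' direction is the easy one: if $E_H$ is polystable, then either it is stable — in which case $(E_H,\rho)$ is equivariantly stable (this implication is stated in the introduction and is elementary, since a $G$-invariant reduction violating stability is in particular a reduction violating stability) and hence equivariantly polystable by definition — or there is a reduction $E_{L(Q)}\subset E_H$ to a Levi subgroup with $E_{L(Q)}$ stable and the induced $Q$-reduction admissible. In the latter case one needs to arrange that the Levi reduction can be taken $G$-invariant, and then conditions (1)–(3) of the definition of equivariant polystability follow, the middle one again because stability of $E_{L(Q)}$ implies its equivariant stability.

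For the substantive ``if'' direction, I would assume $(E_H,\rho)$ is equivariantly polystable and show $E_H$ is polystable. If $(E_H,\rho)$ is equivariantly stable, apply Lemma \ref{lem1}: being equivariantly stable it is in particular equivariantly semistable, so $E_H$ is semistable. Now either $E_H$ is stable, and we are done, or $E_H$ is semistable but not stable; in that case $E_H$ is known to admit a canonical (hence, by uniqueness, $G$-invariant) admissible reduction $E_{L(Q)}\subset E_H$ to a Levi subgroup of some proper parabolic $Q$ with $E_{L(Q)}$ stable — this is the analogue of the socle/Jordan–Hölder reduction for semistable bundles, and I would cite the relevant theorem of Anchouche–Biswas or Ramanathan; its uniqueness up to the obvious ambiguity forces $G$-invariance, exactly as the Harder–Narasimhan reduction was used in Lemma \ref{lem1}. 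So $E_H$ is polystable. If instead $(E_H,\rho)$ is equivariantly polystable via a $G$-invariant Levi reduction $E_{L(Q)}$ with $E_{L(Q)}$ equivariantly stable and the $Q$-reduction admissible, then I must upgrade ``$E_{L(Q)}$ equivariantly stable'' to ``$E_{L(Q)}$ polystable''. This is where I expect the real work to lie.

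The honest way to handle that last point, and the main obstacle, is that equivariant stability of $E_{L(Q)}$ does \emph{not} imply stability of $E_{L(Q)}$ (this is the whole theme of the paper), so I cannot simply conclude $E_{L(Q)}$ is stable. Instead I would argue: $E_{L(Q)}$ equivariantly stable $\Rightarrow$ equivariantly semistable $\Rightarrow$ (by Lemma \ref{lem1} applied to the reductive group $L(Q)$) $E_{L(Q)}$ is semistable. Then, if $E_{L(Q)}$ is not stable, apply the socle reduction for semistable $L(Q)$-bundles again to get a $G$-invariant Levi reduction inside $E_{L(Q)}$, so $E_{L(Q)}$ is polystable; combining the chain of Levi subgroups and checking that admissibility is preserved under this composition (using additivity of degrees of associated line bundles and the fact that a Levi of a Levi of $Q$ is a Levi of a parabolic of $H$), one obtains that $E_H$ itself is polystable. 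The delicate steps to verify carefully are: (i) that the canonical semistable-socle reduction exists for principal bundles over a higher-dimensional base with the codimension-two convention used here, which I would pull from \cite{BH} or \cite{Ra}; (ii) that its uniqueness genuinely yields $G$-invariance; and (iii) the compatibility of the admissibility condition along the tower $L(Q)\subset Q\subset H$. The rest is bookkeeping parallel to the proof of Lemma \ref{lem1}.
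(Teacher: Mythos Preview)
Your outline has genuine gaps in both directions, and in each case the gap is exactly where the paper invests real work.

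\textbf{The ``only if'' direction.} You write that when $E_H$ is polystable but not stable, ``one needs to arrange that the Levi reduction can be taken $G$-invariant'' and then move on. This is the entire content of that direction, and it is not free: the Levi reduction witnessing polystability is \emph{not} canonical, so the action $\rho$ has no reason to preserve it. The paper does not attempt to make a given Levi reduction $G$-invariant at all. Instead it passes to $k=\mathbb{C}$, fixes a $K$-invariant K\"ahler form in the class $c_1(\xi)$ for a maximal compact $K\subset G$, and invokes the \emph{unique} Einstein--Hermitian connection on the polystable bundle $E_H$. Uniqueness forces this connection to be $K$-invariant, and then the Ramanathan--Subramanian argument that Einstein--Hermitian implies polystable runs $K$-equivariantly, producing a $K$-invariant (hence $G$-invariant, since $K$ is Zariski dense) Levi reduction of the required type. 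Nothing in your proposal substitutes for this analytic input.

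\textbf{The ``if'' direction.} You repeatedly assert that a semistable, non-stable principal bundle admits a \emph{canonical} reduction to a Levi subgroup $L(Q)$ with $E_{L(Q)}$ stable, citing a ``socle/Jordan--H\"older'' construction. That is circular: the existence of such a Levi reduction with admissible extension is precisely the definition of polystability, and a general semistable bundle need not be polystable. What the socle construction of \cite{AnB} actually yields, for a semistable bundle that is \emph{not polystable}, is a unique admissible reduction to a proper \emph{parabolic} subgroup (not a Levi). The paper uses this correctly: starting from the $G$-invariant minimal Levi reduction $E_{L(Q)}$ of \cite{BP} (which exists for any homogeneous bundle and is automatically equivariantly stable), one knows $E_{L(Q)}$ is semistable by Lemma~\ref{lem1}; if it were not polystable, the canonical parabolic reduction of \cite[Proposition~2.12]{AnB} would be $G$-invariant by uniqueness and would contradict the equivariant stability of $E_{L(Q)}$. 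So $E_{L(Q)}$ is polystable, hence $E_H$ is. Your version inverts the logic --- trying to produce the Levi reduction directly rather than ruling out the obstruction to polystability --- and as stated proves nothing.
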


\begin{proof}
First assume that $E_H$ is polystable. We will show
that $(E_H\, ,
\rho)$ is equivariantly polystable.
Since the characteristic of the field $k$ is zero,
it suffices to prove this under the assumption that
$k\, =\, \mathbb C$. We assume that $k\, =\, \mathbb C$.

Fix a maximal compact subgroup
\begin{equation}\label{K}
K\, \subset\, G\, .
\end{equation}
Fix a K\"ahler form $\omega$ on $G/P$ satisfying the following
two conditions:
\begin{itemize}
\item the action of $K$ on $G/P$ (given by $\phi$ in Eq.
\eqref{e3}) preserves $\omega$, and

\item the cohomology class in $H^2(G/P,\, {\mathbb C})$
represented by the closed form
$\omega$ coincides with $c_1(\xi)$, where
$\xi$ is the fixed ample line bundle on $G/P$.
\end{itemize}

Since the principal $H$--bundle $E_H$ is polystable, it admits
a unique Einstein--Hermitian connection with respect to $\omega$
\cite[page 24, Theorem 1]{RS}, \cite[page 221, Theorem 3.7]{AnB}.
Although the uniqueness of an Einstein--Hermitian connection
is well known, we will explain it here because 
neither of \cite{RS} and \cite{AnB} explicitly mentions it.

On a vector bundle $W$ admitting an Einstein--Hermitian
connection, there is exactly one Einstein--Hermitian
connection. Indeed, if $W$ is indecomposable, then this
is proved in \cite[page 12, Corollary 9 (i)]{Do}; the general
case, where $W$ is a direct sum of indecomposable vector bundles,
follows from this and \cite[page 878, Proposition 3.3]{Si}. Let
$$
Z_0(H)\, \subset\, H
$$
be the connected component, containing
the identity element, of the center of $H$. Take any homomorphism
\begin{equation}\label{beta}
\beta\, :\, H\, \longrightarrow\, \text{GL}(n,{\mathbb C})
\end{equation}
that takes $Z_0(H)$ to the center of
$\text{GL}(n,{\mathbb C})$. Let
$$
E_H(\beta)\, :=\, E_H\times^H {\mathbb C}^n\, \longrightarrow
\, G/P
$$
be the vector bundle associated to the principal $H$--bundle $E_H$
for $\beta$ and the standard representation of $\text{GL}(n,{\mathbb 
C})$. The condition on $\beta$ ensures that the
connection $\nabla(E_H(\beta))$ on $E_H(\beta)$
induced by an Einstein--Hermitian connection $\nabla(E_H)$ 
on $E_H$ is also Einstein--Hermitian. Hence $\nabla(E_H(\beta))$
is the unique Einstein--Hermitian connection
on the vector bundle $E_H(\beta)$.

Fix characters
$$
\chi_i\, :\, H\, \longrightarrow\, {\mathbb C}^*\, ,
$$
$i\, \in\, [1\, ,n]$, such that the map
$$
\prod_{i=1}^n \chi_i\, :\, H/[H\, ,H]\, \longrightarrow\,
({\mathbb C}^*)^n
$$
is an embedding. Let $\mathfrak h$ be the Lie algebra of $H$.
Let $\nabla_1$ and $\nabla_2$ be two connections on
the principal $H$--bundle $E_H$ satisfying the following
conditions:
\begin{itemize}
\item the connections on the adjoint vector bundle
$\text{ad}(E_H)\, :=\, E_H\times^H \mathfrak h$ induced by
$\nabla_1$ and $\nabla_2$ coincide, and

\item for each $i\, \in\, [1\, ,n]$, the connections on the
associated line bundle $E_H\times^{\chi_i} \mathbb C$ induced by
$\nabla_1$ and $\nabla_2$ coincide.
\end{itemize}
Then it is straight forward to check that $\nabla_1$ coincides
with $\nabla_2$.

Now setting the above representations for $\beta$ in
Eq. \eqref{beta} we conclude that $E_H$ admits at
most one Einstein--Hermitian connection.

It should be clarified that although the Einstein--Hermitian
connection is unique, the Einstein--Hermitian metric (which
is a $C^\infty$ reduction of structure group of the principal
$H$--bundle to a maximal compact subgroup of $H$)
is not unique. Any two Einstein--Hermitian reductions on a given
principal $H$--bundle differ by the translation by an element of 
$Z_0(H)/K(Z_0(H))$, where $K(Z_0(H))\, \subset\, Z_0(H)$ is
the maximal compact subgroup.

Let $\nabla(E_H)$ denote the unique Einstein--Hermitian
connection on $E_H$. From the uniqueness of $\nabla(E_H)$ it
follows immediately that the action of the group $K$ in Eq.
\eqref{K} on $E_H$ (given by $\phi$ in Eq.
\eqref{e3}) preserves the connection $\nabla(E_H)$.

In \cite{RS} it is proved that a principal bundle
admitting an Einstein--Hermitian connection is polystable
(see \cite[\S~4, pages 28--29]{RS}). Using the fact that
the action of $K$ on $E_H$ preserves
the Einstein--Hermitian connection $\nabla(E_H)$, this
proof in \cite{RS} gives that $E_H$ is equivariantly polystable
for the action of $K$ on $E_H$. Since $K$ is a maximal compact
subgroup of $G$, any $K$--invariant holomorphic reduction
of structure group of $E_H$ is automatically $G$--invariant.
Therefore, we now conclude that the homogeneous principal
$H$--bundle $(E_H\, ,\rho)$ is equivariantly polystable for the
action of $G$ on $E_H$.

To prove the converse,
assume that $(E_H\, ,\rho)$ is equivariantly polystable. Let
$$
E_{L(Q)}\, \subset\, E_H
$$
be a $G$--invariant minimal Levi reduction of the
structure group \cite[page 56, Theorem 1.3]{BP}. The
action of $G$ on $E_{L(Q)}$ induced by $\rho$ will
also be denoted by $\rho$. We note that the homogeneous
principal $L(Q)$--bundle $(E_{L(Q)}\, ,\rho)$ is
equivariantly stable because $E_{L(Q)}$ is a $G$--invariant
minimal Levi reduction of $E_H$.

Since $(E_{L(Q)}\, ,\rho)$ is
equivariantly stable, from Lemma \ref{lem1} it follows
that the principal $L(Q)$--bundle $E_{L(Q)}$ is semistable.
Let $\text{ad}(E_{L(Q)})$ be the adjoint vector bundle of
$E_{L(Q)}$. We recall that $\text{ad}(E_{L(Q)})$ is the vector
bundle over $G/P$
associated to the principal $L(Q)$--bundle $E_{L(Q)}$ for the
adjoint action of $E_{L(Q)}$ on its own Lie algebra.
The adjoint vector bundle $\text{ad}(E_{L(Q)})$ is semistable
because the principal $L(Q)$--bundle $E_{L(Q)}$ is semistable
\cite[page 285, Theorem 3.18]{RR}. Let
\begin{equation}\label{w0}
W_0\, \subset\, \text{ad}(E_{L(Q)})
\end{equation}
be the socle of the semistable vector bundle
(see \cite[page 23, Lemma 1.5.5]{HL}). Since the vector
bundle $\text{ad}(E_{L(Q)})$ is homogeneous, it follows that
$W_0$ is actually a subbundle of $\text{ad}(E_{L(Q)})$.

We will show that the principal $L(Q)$--bundle $E_{L(Q)}$
is polystable.

To prove this by contradiction, assume that $E_{L(Q)}$
is not polystable. Therefore, the semistable vector bundle
$\text{ad}(E_{L(Q)})$ is not polystable. Consequently, the
subbundle $W_0$ in Eq. \eqref{w0} is a proper one.

In \cite{AnB}, using $W_0$ a unique reduction of structure
group of $E_H$
\begin{equation}\label{tr2.}
E_{Q_0}\, \subset\, E_{L(Q)}
\end{equation}
to a certain proper parabolic subgroup
\begin{equation}\label{proper}
Q_0\,\subsetneq\, L(Q)
\end{equation}
is constructed; the parabolic subgroup
$Q_0$ is also constructed using $W_0$ (see \cite[page 218,
Proposition 2.12]{AnB}). This reduction $E_{Q_0}$ constructed
in \cite[page 218, Proposition 2.12]{AnB} is admissible
(admissible reductions were defined in Section
\ref{sec2}). From the uniqueness of $E_{Q_0}$
in Eq. \eqref{tr2.}
it follows immediately that the action $\rho$ of $G$ on
$E_{L(Q)}$ leaves the subvariety
$E_{Q_0}$ invariant. Since $E_{Q_0}$ is an admissible
reduction of structure group of $E_{L(Q)}$
which is left invariant by the action of $G$ on
$E_{L(Q)}$, and $E_{L(Q)}$ is equivariantly stable, if
follows that $Q_0\, =\, L(Q)$. But this contradicts
Eq. \eqref{proper}.

Consequently, $W_0\, =\, \text{ad}(E_{L(Q)})$. Therefore,
we conclude that the principal $L(Q)$--bundle $E_{L(Q)}$ is
polystable.

Since the principal $L(Q)$--bundle $E_{L(Q)}$ is
polystable, it follows that the principal $H$--bundle $E_H$
is polystable. This completes the proof of the lemma.
\end{proof}

We note that the analog of Lemma \ref{lem1} and Lemma
\ref{lem2} for stable principal bundles is not valid.
In other words, there are equivariantly stable
homogeneous principal $H$--bundles $(E_H\, ,\rho)$ such
that $E_H$ is not stable.

To construct such
an example, take a pair $(H\, ,\eta)$, where
$H$ is a connected reductive nonabelian linear
algebraic group defined over $k$, and
\begin{equation}\label{eta}
\eta\, :\, G\, \longrightarrow\, H
\end{equation}
is a homomorphism satisfying the following condition:
the image $\eta(G)$ is not contained in any proper
parabolic subgroup of $H$. For example, we may take
$H\, =\, G$ and $\eta\, =\, \text{Id}_G$.

Let $E_H$ be the trivial principal $H$--bundle $M\times
H$ over $M\, =\, G/P$. Since $H$ is not abelian, and
$E_H$ is trivial, it follows that the principal
$H$--bundle $E_H$ is not stable.

We will construct an action of $G$ on $E_H$.

Consider the action of $G$ on $G/P$ defined by the
homomorphism $\phi$ in Eq. \eqref{e3} together with the
left translation action of $G$ on $H$ using the
homomorphism $\eta$ in Eq. \eqref{eta}. Now let $\rho$
denote the diagonal action of $G$ on $E_H\,=\, (G/P)\times
H$. We will show that the resulting homogeneous principal
$H$--bundle $(E_H \, ,\rho)$ is equivariantly stable.

To prove by contradiction, assume that
$(E_H \, ,\rho)$ is not equivariantly stable. Since
$E_H$ is trivial, it is polystable. Hence from Lemma
\ref{lem2} we know that $(E_H \, ,\rho)$ is equivariantly
polystable. Therefore, there is a proper parabolic
subgroup $Q\, \subset\, H$ and a $G$--invariant reduction 
of structure group
\begin{equation}\label{eqh}
E_Q\, \subset\, E_H
\end{equation}
over $G/P$ such that
\begin{equation}\label{eqh1}
\text{degree}(\text{ad}(E_H)/\text{ad}(E_Q))\, =\, 0\, ,
\end{equation}
where $\text{ad}(E_H)$ and $\text{ad}(E_Q)$ are the
adjoint vector bundles of $E_H$ and $E_Q$ respectively.

Let $q_0$ denote the dimension of the group $Q$.

Since $E_H\, =\, M\times H$, the adjoint vector bundle
$\text{ad}(E_H)$ is identified with the trivial vector
bundle $M\times {\mathfrak h}$, where $\mathfrak h$ is
the Lie algebra of $H$. Therefore, the subbundle
\begin{equation}\label{th2.}
\text{ad}(E_Q)\, \subset\, \text{ad}(E_H)\, =\,
M\times {\mathfrak h}
\end{equation}
defines a morphism
\begin{equation}\label{theta}
\theta\, :\, G/P\, \longrightarrow\, \text{Gr}(q_0,
{\mathfrak h})\, ,
\end{equation}
where $\text{Gr}(q_0, {\mathfrak h})$ is the Grassmann
variety that parametrizes all subspaces of ${\mathfrak h}$
of dimension $q_0\, :=\,
\dim Q$. The morphism $\theta$ in Eq. \eqref{theta}
sends any $x\, \in\, G/P$ to the subspace
$\text{ad}(E_Q)_x\, \subset\, \text{ad}(E_H)_x\, =\,
\mathfrak h$. Therefore,
$$
\text{ad}(E_H)/\text{ad}(E_Q)\, =\, \theta^*{\mathcal Q}\, ,
$$
where ${\mathcal Q}\, \longrightarrow\, \text{Gr}(q_0,
{\mathfrak h})$ is the tautological quotient bundle (the
fiber of ${\mathcal Q}$ over any point of $\text{Gr}(q_0,
{\mathfrak h})$ is the quotient of $\mathfrak h$ by the
the corresponding subspace). Hence from
Eq. \eqref{eqh1},
\begin{equation}\label{th.}
\text{degree}(\theta^*{\mathcal Q})\, =\,
\text{degree}(\theta^*\det ({\mathcal Q}))\, =\, 0\, ,
\end{equation}
where $\det ({\mathcal Q}) \, =\,
\bigwedge^{\text{top}}{\mathcal Q}$ is the top exterior product
of $\mathcal Q$. The line bundle $\det ({\mathcal Q})$ over
$\text{Gr}(q_0, {\mathfrak h})$ is ample. Hence from Eq.
\eqref{th.} it follows that $\theta$ is a constant map.
Therefore, there is a subspace
\begin{equation}\label{v0}
V_0\, \subset\, {\mathfrak h}
\end{equation}
such that the subbundle $\text{ad}(E_Q)$ in Eq. \eqref{th2.}
coincides with $M\times V_0\, \subset\, M\times {\mathfrak h}$.

Let $\mathfrak q$ be the Lie algebra of $Q$.
Since $\text{ad}(E_Q)$ is the adjoint vector bundle of $E_Q$
it follows that the subspace $V_0$ in Eq. \eqref{v0} is
a conjugate of the subspace ${\mathfrak q}\, \subset\,
{\mathfrak h}$. Therefore, $V_0$ is the Lie algebra of a
parabolic subgroup
\begin{equation}\label{q0}
Q_0\, \subset\, H
\end{equation}
which is a conjugate of $Q$.

Recall the given condition that the action of $G$ on $E_H$
leaves $E_Q$ invariant. This implies that the adjoint action
of $G$ on $\mathfrak h$, defined using the homomorphism $\eta$
in Eq. \eqref{eta}, leaves the subspace $V_0$ invariant.
Since $Q_0$ is a parabolic subgroup (see Eq. \eqref{q0}),
and $V_0$ is the Lie algebra of $Q_0$,
the normalizer of $V_0$ inside $H$ coincides with $Q_0$
(see \cite[page 154, Theorem 11.16]{Bo}, \cite[page 179,
Theorem (c)]{Hu}). Consequently, we have
$$
\eta(G)\, \subset\, Q_0\, .
$$
But this contradicts the given condition that $\eta(G)$ is
not contained in any proper parabolic subgroup of $H$.
Thus there is no $G$--invariant reduction as in Eq.
\eqref{eqh}.

Therefore, we conclude that the homogeneous principal
$H$--bundle $(E_H \, ,\rho)$ is equivariantly stable.

We note that if we set $H\, =\, \text{GL}(n,k)$, then
the above example gives a counter--example to
Corollary 2.11 in \cite{Ro}.

\section{Stable homogeneous principal bundles}

A homogeneous principal $H$--bundle $(E_H\, ,\rho)$ over
$G/P$ is clearly equivariantly stable if $E_H$ is stable.
The following theorem is a converse of this.

\begin{theorem}\label{thm1}
Let $(E_H\, ,\rho)$ be an equivariantly stable
homogeneous principal $H$--bundle over
$G/P$, where $H$ is a connected reductive
linear algebraic group defined over
$k$, such that the principal $H$--bundle
$E_H$ is not stable. Then there is an action
$$
\rho'\, :\, G \times E_H\, \longrightarrow\, E_H
$$
of $G$ on $E_H$ such that the following two hold:
\begin{enumerate}
\item the pair $(E_H\, ,\rho')$ is a homogeneous
principal $H$--bundle, and

\item the homogeneous principal $H$--bundle
$(E_H\, ,\rho')$ is not equivariantly stable.
\end{enumerate}
\end{theorem}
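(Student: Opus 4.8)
The plan is to obtain $\rho'$ by putting a homogeneous structure on a Levi reduction of $E_H$ and then extending it. First, since $(E_H\, ,\rho)$ is equivariantly stable it is, in particular, equivariantly polystable (the first alternative in that definition), so Lemma \ref{lem2} shows that the principal $H$--bundle $E_H$ is polystable. As $E_H$ is polystable but not stable, the definition of polystability provides a proper parabolic subgroup $Q\,\subsetneq\, H$ and a reduction of structure group $E_{L(Q)}\,\subset\, E_H$ to a Levi subgroup $L(Q)$ of $Q$ such that the principal $L(Q)$--bundle $E_{L(Q)}$ is stable and the associated reduction $E_Q\,:=\, E_{L(Q)}\times^{L(Q)} Q\,\subset\, E_H$ of $E_H$ to $Q$ is admissible.

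The main step is to check that $E_{L(Q)}$, regarded as a principal $L(Q)$--bundle over $G/P$, satisfies the hypothesis of Proposition \ref{prop1}: for every $g\,\in\, G$ the pulled back bundle $\phi(g)^*E_{L(Q)}$ is isomorphic to $E_{L(Q)}$. Since $(E_H\, ,\rho)$ is homogeneous, $\phi(g)^*E_H$ is isomorphic to $E_H$, and transporting $\phi(g)^*E_{L(Q)}$ along such an isomorphism exhibits it as a reduction of $E_H$ to $L(Q)$ whose associated $L(Q)$--bundle is again stable and whose extension to a parabolic subgroup is again admissible. The uniqueness --- up to an automorphism of $E_H$ and conjugation in $H$ --- of the Levi reduction witnessing polystability of $E_H$ (see \cite[page 218, Proposition 2.12]{AnB} and \cite[page 56, Theorem 1.3]{BP}) then gives $\phi(g)^*E_{L(Q)}\,\cong\, E_{L(Q)}$. (The conjugation ambiguity can be removed by working with $E_{L(Q)}\times^{L(Q)} N_H(L(Q))$ in place of $E_{L(Q)}$: since $G/P$ is simply connected, the finite \'etale covering $E_{L(Q)}\times^{L(Q)}(N_H(L(Q))/L(Q))\,\to\, G/P$ is trivial, so any homogeneous structure on this bundle preserves the sheet corresponding to $E_{L(Q)}$ and hence restricts to one on $E_{L(Q)}$.)

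Granting this, Proposition \ref{prop1} yields an action $\rho_L\, :\, G\times E_{L(Q)}\,\to\, E_{L(Q)}$ making $(E_{L(Q)}\, ,\rho_L)$ a homogeneous principal $L(Q)$--bundle. Writing $E_H\,=\, E_{L(Q)}\times^{L(Q)} H$, one sets $\rho'(g\, ,[z\, ,h])\,:=\,[\rho_L(g\, ,z)\, ,h]$. Using that $\rho_L$ commutes with the right $L(Q)$--action on $E_{L(Q)}$ and covers the left--translation action on $G/P$, a direct check shows that $\rho'$ is a well--defined action of $G$ on $E_H$ fulfilling the two conditions of Definition \ref{def1}; this gives (1). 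By construction $\rho'$ leaves $E_{L(Q)}\,\subset\, E_H$ invariant, hence it also leaves $E_Q\,=\, E_{L(Q)}\times^{L(Q)} Q$ invariant. As $E_Q$ is an admissible reduction to the proper parabolic subgroup $Q$, one has $\text{degree}(E_Q(\lambda))\,=\, 0$ for every strictly anti--dominant character $\lambda$ of $Q$ (equivalently, after enlarging $Q$ to a maximal parabolic subgroup and extending the reduction accordingly, the corresponding section $\sigma$ satisfies $\text{degree}(\sigma^*T_{\text{rel}})\,=\, 0$). Hence the strict inequality in the definition of equivariant stability fails for the $\rho'$--invariant reduction $E_Q$, which proves (2).

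The only substantial point is the second paragraph: the identification $\phi(g)^*E_{L(Q)}\,\cong\, E_{L(Q)}$ for all $g\,\in\, G$, which rests on the uniqueness of the Levi reduction witnessing polystability of $E_H$. The remaining steps are direct applications of Lemma \ref{lem2}, Proposition \ref{prop1}, and the relevant definitions.
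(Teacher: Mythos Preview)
Your argument follows the same route as the paper's: deduce from Lemma \ref{lem2} that $E_H$ is polystable, take a Levi reduction $E_{L(Q)}\subset E_H$ with $L(Q)\subsetneq H$, argue that its isomorphism class is preserved under every $\phi(g)^*$, apply Proposition \ref{prop1} to make $E_{L(Q)}$ homogeneous, extend the action to $E_H$, and note that the resulting $G$--invariant reduction to a proper parabolic violates equivariant stability.

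The one point where you diverge from the paper is in the choice of Levi reduction and the justification of the key invariance $\phi(g)^*E_{L(Q)}\cong E_{L(Q)}$. The paper does \emph{not} use the Levi reduction coming from the definition of polystability; it uses the Krull--Schmidt (minimal) Levi reduction of \cite{BBN1}, \cite{BBN2}, whose isomorphism class is uniquely determined by the principal $H$--bundle alone \cite[Proposition 3.3]{BBN1}. That uniqueness immediately yields $\phi(g)^*E_{L(Q)}\cong E_{L(Q)}$. By contrast, the references you invoke do not supply the uniqueness you claim: \cite[Proposition 2.12]{AnB} constructs a canonical parabolic (not Levi) reduction from the socle of the adjoint bundle, and \cite[Theorem 1.3]{BP} produces a $G$--\emph{equivariant} minimal Levi reduction, which presupposes a $G$--action already leaving such a reduction invariant, so it cannot be used here to manufacture one. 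Thus your second paragraph, which you yourself flag as ``the only substantial point'', is not established by the citations you give; it is precisely where the paper brings in \cite{BBN1}. Once you replace your Levi reduction by the Krull--Schmidt one, the detour through $N_H(L(Q))$ and the simply--connectedness argument become unnecessary, and the rest of your proof matches the paper's.
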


\begin{proof}
Since $(E_H\, ,\rho)$ is equivariantly
stable, from Lemma \ref{lem2} we know that
the principal $H$--bundle $E_H$ is polystable.
Therefore, from the given condition that
$E_H$ is not stable it follows immediately
that $E_H$ admits a reduction of structure group
\begin{equation}\label{re.}
E_{L(Q')}\, \subset\, E_H
\end{equation}
to a Levi subgroup $L(Q')$ of some
proper parabolic subgroup $Q'$ of $H$.

Therefore, there is a natural reduction of structure group
of $E_H$ to a Levi subgroup $L(Q)$ of a parabolic subgroup
$Q\, \subset\, H$
\begin{equation}\label{e12}
E_{L(Q)}\, \subset\, E_H
\end{equation}
which has the following property: the subgroup
$L(Q)\, \subset\, H$ is smallest among
all the Levi subgroups of parabolic subgroups of $H$
to which $E_H$ admits a reduction of structure group
(see \cite[page 230, Theorem 3.2]{BBN1} and
\cite[page 232, Theorem 3.4]{BBN1}). An alternative
construction of this reduction of structure group
in Eq. \eqref{e12} is given in \cite{BBN2}.

Since $L(Q')$ in Eq. \eqref{re.} is a Levi subgroup of a
proper parabolic subgroup of $H$, the Levi
subgroup $L(Q)$ in Eq. \eqref{e12} must be a
proper subgroup of $H$.

It should be mentioned that unlike the two reductions
in Lemma \ref{lem1} and Eq. \eqref{tr2.}, the reduction
in Eq. \eqref{e12} is not unique. However, the isomorphism
class of the principal $L(Q)$--bundle $E_{L(Q)}$ in
Eq. \eqref{e12} is uniquely determined (see \cite[page 232,
Proposition 3.3]{BBN1}). From this it can be deduced
that for each point $g\, \in\, G$, the pulled back principal
$L(Q)$--bundle $\phi(g)^*E_{L(Q)}$ is isomorphic to
$E_{L(Q)}$, where $\phi$ is the homomorphism in Eq.
\eqref{e3}. Indeed, the pulled back
reduction of structure group
$$
\phi(g)^*E_{L(Q)}\, \subset\, \phi(g)^*E_H
$$
is of the type constructed in \cite{BBN1}. The
principal $H$--bundle $\phi(g)^*E_H$ is isomorphic to
$E_H$ because $(E_H\, ,\rho)$ is homogeneous. Hence from
\cite[page 232, Proposition 3.3]{BBN1} it follows
immediately that $\phi(g)^*E_{L(Q)}$ is isomorphic to
the principal $L(Q)$--bundle $E_{L(Q)}$.

Now from Proposition \ref{prop1} we know that there is an
action of $G$ on $E_{L(Q)}$
\begin{equation}\label{e13}
\rho''\, :\, G\times E_{L(Q)}\, \longrightarrow\,E_{L(Q)}
\end{equation}
such that the pair $(E_{L(Q)}\, ,\rho'')$ is a homogeneous
principal $L(Q)$--bundle.

Since $E_{L(Q)}$ is a reduction of structure group of $E_H$,
the action $\rho''$ (see Eq. \eqref{e13}) induces an action
of $G$ on $E_H$. To explain this, first note that
$E_H$ is a quotient of $E_{L(Q)}\times H$. Two points
$(z_1\, ,h_1)$ and $(z_2\, ,h_2)$ of $E_{L(Q)}\times H$ are
identified in the quotient space $E_H$ if and only if there
is an element $g\, \in\, L(Q)$ such that
$(z_2\, ,h_2)\, =\, (z_1g\, ,g^{-1}h_1)$. The action
$\rho''$ of $G$ on $E_{L(Q)}$ and the trivial action of
$G$ on $H$ together define an action of $G$ on $E_{L(Q)}
\times H$. Using the fact that the actions, on $E_{L(Q)}$,
of $L(Q)$ and $G$ (defined by $\rho''$) commute we conclude
that the above action of $G$ on $E_{L(Q)}\times H$ descends
to an action of $G$ on the quotient space $E_H$. Let
$$
\rho'\, :\, G\times E_H\, \longrightarrow\,E_H
$$
be this descended action. It is now easy to see
that this pair $(E_H\, ,\rho')$ is a homogeneous principal
$H$--bundle.

The action $\rho'$ of $G$ on $E_H$ preserves the subvariety
$$
E_{L(Q)}\, \subset\, E_H
$$
in Eq. \eqref{e12}. In fact, the restriction of $\rho'$ to
$E_{L(Q)}$ coincides with $\rho''$. We noted earlier that
$L(Q)$ is a proper subgroup of $H$. Hence the existence
of the $\rho'$ invariant reduction $E_{L(Q)}\,\subset\,
E_H$ proves that $(E_H\, ,\rho')$ is not equivariantly
stable. This completes the proof of the theorem.
\end{proof}

%%%%%%%%%%%%%%%%%%%%%%%%%%%%%%%%%%%%%%%%%%%%%%%%%%%%%%%%%%%%%%%%%

\end{document}